\numberwithin{equation}{section}
\theoremstyle{plain}	     
\newtheorem{thm}{Theorem}[section] 
\newtheorem{lem}[thm]{Lemma}
\theoremstyle{definition}
\theoremstyle{remark}
\begin{document}

\title{Two double-angle formulas of generalized trigonometric functions
\footnote{The work of S.\,Takeuchi was supported by JSPS KAKENHI Grant Number 17K05336.}}
\author{Shota Sato and Shingo Takeuchi \\
Department of Mathematical Sciences\\
Shibaura Institute of Technology
\thanks{307 Fukasaku, Minuma-ku,
Saitama-shi, Saitama 337-8570, Japan. \endgraf
{\it E-mail address\/}: shingo@shibaura-it.ac.jp \endgraf
{\it 2010 Mathematics Subject Classification.} 
33E05, 34L40}}

\date{}

\maketitle

\begin{abstract}
With respect to generalized trigonometric functions,
since the discovery of double-angle formula for a special case
by Edmunds, Gurka and Lang in 2012,
no double-angle formulas have been found. 
In this paper, we will establish new double-angle formulas 
of generalized trigonometric functions in two special cases.
\end{abstract}

\textbf{Keywords:} 
Generalized trigonometric functions,
Double-angle formulas,
Lemniscate function,
Jacobian elliptic functions,
Dixon's elliptic functions,
$p$-Laplacian.


\section{Introduction}

Let $p,\ q>1$ and 
$$F(x):=\int_0^x \frac{dt}{(1-t^q)^{1/p}}, \quad x \in [0,1].$$
We will denote by $\sin_{p,q}$ the inverse function of $F$, i.e.,
$$\sin_{p,q}{x}:=F^{-1}(x).$$
Clealy, $\sin_{p,q}{x}$ is an increasing function in $[0,\pi_{p,q}/2]$ to $[0,1]$,
where
$$\pi_{p,q}:=2F(1)=2\int_0^1 \frac{dt}{(1-t^q)^{1/p}}.$$
We extend $\sin_{p,q}{x}$ to $(\pi_{p,q}/2,\pi_{p,q}]$ by $\sin_{p,q}{(\pi_{p,q}-x)}$
and to the whole real line $\mathbb{R}$ as the odd $2\pi_{p,q}$-periodic 
continuation of the function. 
Since $\sin_{p,q}{x} \in C^1(\mathbb{R})$,
we also define $\cos_{p,q}{x}$ by $\cos_{p,q}{x}:=(d/dx)(\sin_{p,q}{x})$.
Then, it follows that 
$$|\cos_{p,q}{x}|^p+|\sin_{p,q}{x}|^q=1.$$
In case $(p,q)=(2,2)$, it is obvious that $\sin_{p,q}{x},\ \cos_{p,q}{x}$ 
and $\pi_{p,q}$ are reduced to the ordinary $\sin{x},\ \cos{x}$ and $\pi$,
respectively. 
This is a reason why these functions and the constant are called
\textit{generalized trigonometric functions} (with parameter $(p,q)$)
and the \textit{generalized $\pi$}, respectively. 

The generalized trigonometric functions are well studied in the context of 
nonlinear differential equations (see \cite{KT2019} and the references given there). 
Suppose that $u$ is a solution of 
the initial value problem of $p$-Laplacian
$$-(|u'|^{p-2}u')'=\frac{(p-1)q}{p} |u|^{q-2}u, \quad u(0)=0,\ u'(0)=1,$$
which is reduced to the equation $-u''=u$ of simple harmonic motion for
$u=\sin{x}$ in case $(p,q)=(2,2)$.  
Then, 
$$\frac{d}{dx}(|u'|^p+|u|^q)=\left(\frac{p}{p-1}(|u'|^{p-2}u')'+q|u|^{q-2}u\right)u'=0.$$
Therefore, $|u'|^p+|u|^q=1$, hence it is reasonable to define $u$ as 
a generalized sine function and $u'$ as a generalized cosine function.
Indeed, it is possible to show that $u$ coincides with $\sin_{p,q}$ defined as above.
The generalized trigonometric functions are often applied to
the eigenvalue problem of $p$-Laplacian.

Now, we are interested in finding double-angle formulas for 
generalized trigonometric functions.
It is possible to discuss addition formulas for these functions, 
but for simplicity we will not develop this point here.

No one doubts that the most basic formula is 
$$\sin_{2,2}{2x}=2\sin_{2,2}{x}\cos_{2,2}{x},\quad x \in \mathbb{R},$$
which is said to have been developed by Abu al-Wafa' (940--998),
a Persian mathematician and astronomer.
In case $(p,q)=(2,4)$, it is easy to see that $\sin_{2,4}{x}$ coincides with
the lemniscate function. Since this classical function has the double-angle
formula  (see, e.g. \cite[p.81]{PS1997}), 
which is due to Euler in 1751, we have
$$\sin_{2,4}{2x}=\frac{2\sin_{2,4}{x}\cos_{2,4}{x}}{1+\sin_{2,4}^4{x}},\quad
x \in \mathbb{R}.$$
The case $(p,q)=(3/2,3)$ goes back to the work of Dixon \cite{D1890} in 1890.
It is simple matter to check that $\sin_{3/2,3}{x}$ is identical to 
his elliptic function for $x \in [0,\pi_{3/2,3}/2]$, so that 
the double-angle formula of his function yields
\begin{equation}
\label{eq:dixon}
\sin_{3/2,3}{2x}=\frac{\sin_{3/2,3}{x}(1+\cos_{3/2,3}^{3/2}{x})}{\cos_{3/2,3}^{1/2}{x}(1+\sin_{3/2,3}^3{x})}, \quad x \in [0,\pi_{3/2,3}/4].
\end{equation}
Recently, Edmunds, Gurka and Lang \cite{EGL2012}
give a remarkable formula for $(p,q)=(4/3,4)$.
Function $\sin_{4/3,4}{x}$ can be written in terms of Jacobian
elliptic function, hence the double-angle formula of Jacobian elliptic function gives  
\begin{equation*}
\sin_{4/3,4}{2x}=\frac{2\sin_{4/3,4}{x}\cos_{4/3,4}^{1/3}{x}}{(1+4\sin_{4/3,4}^4{x}\cos_{4/3,4}^{4/3}{x})^{1/2}},\quad x \in [0,\pi_{4/3,4}/4].
\end{equation*}

As far as the generalized trigonometric functions are concerned, 
no other double-angle formulas have never been published. 

In this paper, we will deal with the cases $(p,q)=(2,3)$ and $(4/3,2)$.
The following double-angle formulas for the two special cases will be established.

\begin{thm}
\label{thm:(p,q)=(2,3)}
Let $(p,q)=(2,3)$. Then,
\begin{equation*}
\sin_{2,3}{2x}=\frac{4\sin_{2,3}{x}\cos_{2,3}{x}(3+\cos_{2,3}{x})^3}{(1+\cos_{2,3}{x})(8+\sin_{2,3}^3{x})^2},
\quad x \in [0,\pi_{2,3}/2].
\end{equation*}
\end{thm}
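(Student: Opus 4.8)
The plan is to reduce the claimed identity to a known double-angle formula for a classical elliptic function by finding an explicit algebraic change of variables that turns $\sin_{2,3}$ into that classical function. The case $(p,q)=(2,3)$ should be connected to the equianharmonic case of the Weierstrass $\wp$-function (or equivalently to a suitable Jacobian elliptic function at a special modulus), since the differential equation $(y')^2 = 1 - y^3$ satisfied by $y=\sin_{2,3}$ on $[0,\pi_{2,3}/2]$ is, up to affine substitution, the Weierstrass equation $(\wp')^2 = 4\wp^3 - g_2\wp - g_3$ with $g_2 = 0$. So the first step is: write $u(x):=\sin_{2,3}x$, record that $u$ solves $(u')^2 = 1-u^3$ with $u(0)=0$, $u'(0)=1$, and find constants $a,b$ so that $v := a + b\,u^{-1}$ (or some Möbius-type substitution in $u$) satisfies the equianharmonic Weierstrass equation; then $u(x) = \phi(cx)$ for an explicit classical function $\phi$ and scaling constant $c$.

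Second, I would transcribe the classical duplication formula for that function. For $\wp$ in the equianharmonic case the duplication formula is rational in $\wp$ and $\wp'$, and since $\wp' $ is (up to constants) $\sqrt{1-u^3}=\cos_{2,3}^{\,p}$-type data, the formula becomes rational in $u$ and $u' = \cos_{2,3}x$. Pulling this back through the substitution of Step 1 should produce exactly the right-hand side $\dfrac{4\sin_{2,3}x\,\cos_{2,3}x\,(3+\cos_{2,3}x)^3}{(1+\cos_{2,3}x)(8+\sin_{2,3}^3x)^2}$; the appearance of the cubes and the factors $3+\cos_{2,3}x$, $8+\sin_{2,3}^3x$ strongly suggests the substitution involves shifting by a $2$-torsion-like point and clearing denominators of the form $(1+\cos_{2,3}x)$. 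I would verify the algebraic identity between the pulled-back classical formula and the stated RHS by a direct computation using only $\cos_{2,3}^2x = 1-\sin_{2,3}^3x$ to eliminate one variable.

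Third, I would handle the range and sign issues. The classical duplication formula and the substitution are valid on an open dense set; one must check that both sides of the claimed identity are continuous on all of $[0,\pi_{2,3}/2]$ and agree there. Near $x=0$ both sides vanish to first order with matching derivative (a Taylor check), at $x=\pi_{2,3}/4$ one has $2x=\pi_{2,3}/2$ so $\sin_{2,3}2x=1$, which should match the RHS by the constraint $\cos_{2,3}^2=1-\sin_{2,3}^3$, and for $x\in(\pi_{2,3}/4,\pi_{2,3}/2]$ one uses the reflection definition $\sin_{2,3}(\pi_{2,3}-y)=\sin_{2,3}y$ to reduce to the first quarter-period. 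Since $\cos_{2,3}x\ge 0$ on $[0,\pi_{2,3}/2]$, no absolute-value subtleties arise, so continuity plus agreement on a dense subset (or on $[0,\pi_{2,3}/4]$ by the argument above, extended by symmetry) finishes it.

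The main obstacle I anticipate is Step 1: pinning down the exact Möbius substitution $u \mapsto v$ and the scaling constant $c$ that convert $(u')^2=1-u^3$ into the standard equianharmonic form, and then carrying out the pullback in Step 2 without sign or branch errors — the denominators $1+\cos_{2,3}x$ and $8+\sin_{2,3}^3x=9-(1-\sin_{2,3}^3x)-... $ hint that the right normalization is delicate. Once the correct substitution is identified, the remaining verification is a finite rational-function identity in one variable and is routine.
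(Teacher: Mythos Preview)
Your strategy is viable but is genuinely different from the paper's route, and as written it is a plan rather than a proof: the decisive computation (the explicit substitution and the pullback of the duplication law) is left undone, and you flag this yourself as the main obstacle.

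The paper does not pass through the Weierstrass $\wp$-function at all. Instead it uses Lemma~\ref{lem:maf}, which for $p=3$ gives
\[
\sin_{2,3}(2^{2/3}t)=2^{2/3}\sin_{3/2,3}t\,\cos_{3/2,3}^{1/2}t,\qquad
\cos_{2,3}(2^{2/3}t)=1-2\sin_{3/2,3}^3 t=2\cos_{3/2,3}^{3/2}t-1,
\]
together with Dixon's double-angle formula \eqref{eq:dixon} for $\sin_{3/2,3}$. Writing $y=x/2^{2/3}$, the paper expresses $\sin_{2,3}(2x)=2^{2/3}\sin_{3/2,3}(2y)\cos_{3/2,3}^{1/2}(2y)$, applies \eqref{eq:dixon} to $\sin_{3/2,3}(2y)$, and then uses the inverted relations $\sin_{3/2,3}y=\bigl((1-C)/2\bigr)^{1/3}$, $\cos_{3/2,3}y=\bigl((1+C)/2\bigr)^{2/3}$ with $C=\cos_{2,3}x$ to rewrite everything rationally in $C$; the stated formula then drops out after using $1-C^2=\sin_{2,3}^3 x$. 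No M\"obius substitution into Weierstrass form is needed, and the range $[0,\pi_{2,3}/2]$ is covered directly because Lemma~\ref{lem:maf} is stated on $[0,\pi_{p^*,p}/2]$.

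Your approach and the paper's are close cousins under the hood---Dixon's elliptic functions are exactly the equianharmonic ($g_2=0$) case---so your Step~1 would in effect rediscover the content of Lemma~\ref{lem:maf} plus \eqref{eq:dixon} in Weierstrass coordinates. What the paper's route buys is that the ``delicate normalization'' you anticipate is already packaged in Lemma~\ref{lem:maf} and \eqref{eq:dixon}, so the remaining work is a short rational simplification in the single variable $C=\cos_{2,3}x$. What your route would buy, if completed, is independence from the prior results \cite{D1890,T2016b}; but as it stands the proposal has not produced the substitution or verified the identity, so it is not yet a proof.
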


\begin{thm}
\label{thm:(p,q)=(4/3,2)}
Let $(p,q)=(4/3,2)$. Then, 
\begin{equation*}
\sin_{4/3,2}{2x}=\frac{4\sin_{4/3,2}{x}\cos_{4/3,2}^{1/3}{x}(1+\cos_{4/3,2}^{4/3}{x})}{(2\cos_{4/3,2}^{2/3}{x}+\sin_{4/3,2}^2{x})^2}, \quad x \in [0,\pi_{4/3,2}/2].
\end{equation*}
\end{thm}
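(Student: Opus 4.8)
The plan is to verify the identity by writing both sides as functions of a single variable and checking that the candidate right-hand side satisfies the first-order relation that characterizes $\sin_{4/3,2}2x$. Fix $(p,q)=(4/3,2)$ and write $s=\sin_{4/3,2}x$, $c=\cos_{4/3,2}x$; on $[0,\pi_{4/3,2}/2]$ both are nonnegative, $c^{4/3}+s^2=1$, $s'=c$ and (differentiating the former) $c'=-\tfrac32 s c^{2/3}$. I would pass to the single variable $w:=c^{2/3}=(1-s^2)^{1/2}$, so that $s^2=1-w^2$, $c^{1/3}=w^{1/2}$, $c^{4/3}=w^2$, $w'=-s c^{1/3}=-(1-w^2)^{1/2}w^{1/2}$, and $w$ decreases strictly from $1$ to $0$ as $x$ runs over $[0,\pi_{4/3,2}/2]$. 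Put $A:=2c^{2/3}+s^2=1+2w-w^2=2-(w-1)^2\ge 1$ and let $R(x)$ denote the right-hand side of the theorem, so $R=4sc^{1/3}(1+c^{4/3})A^{-2}=4(1-w^2)^{1/2}w^{1/2}(1+w^2)A^{-2}$; the goal is $R(x)=\sin_{4/3,2}2x$.

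The heart of the argument is the factorization
\[
1-R(x)^2=\left(\frac{w^2+2w-1}{1+2w-w^2}\right)^{4},
\]
which, after clearing denominators and using $1+2w-w^2=-(w^2-2w-1)$, amounts to the polynomial identity $(w^2-2w-1)^4-(w^2+2w-1)^4=16w(1-w^2)(1+w^2)^2$; one obtains this immediately by writing the left side as a difference of squares, the squared quantities being $w^4\mp4w^3+2w^2\pm4w+1$, whose difference is $8w(1-w^2)$ and whose sum is $2(1+w^2)^2$. Granting it, I would differentiate $R$: log-differentiating $R$ in $w$, then multiplying by $w'$ and using $Rw'=-4(1-w^2)w(1+w^2)A^{-2}$, the expression should collapse (via the identity $(7w^4-2w^2-1)(1+2w-w^2)+8w(1-w)(1-w^4)=(w^2+2w-1)^3$) to
\[
R'(x)=\frac{2(w^2+2w-1)^3}{(1+2w-w^2)^3}.
\]
Combining the two displays gives $R'(x)\big/(1-R(x)^2)^{3/4}=2\,\sgn(w^2+2w-1)$ at every interior point.

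To conclude: $w^2+2w-1$ increases in $w$ with root $\sqrt2-1$, while $w$ decreases in $x$, so there is a unique $x_0$ with $w(x_0)=\sqrt2-1$, and $w^2+2w-1>0$ on $[0,x_0)$, $<0$ on $(x_0,\pi_{4/3,2}/2]$. On $[0,x_0]$ one has $R'=2(1-R^2)^{3/4}$ and $R(0)=0$; integrating and substituting $\eta=R(t)$ gives $\int_0^{R(x)}(1-\eta^2)^{-3/4}\,d\eta=2x$, i.e. $R(x)=F^{-1}(2x)=\sin_{4/3,2}2x$. In particular $R(x_0)=1$ (also visible from the factorization), so $F(1)=2x_0$ forces $x_0=\pi_{4/3,2}/4$. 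On $[\pi_{4/3,2}/4,\pi_{4/3,2}/2]$ the sign reverses, $R'=-2(1-R^2)^{3/4}$ with $R(\pi_{4/3,2}/4)=1$ and $R(\pi_{4/3,2}/2)=0$; integrating from $x$ to $\pi_{4/3,2}/2$ gives $\int_0^{R(x)}(1-\eta^2)^{-3/4}\,d\eta=\pi_{4/3,2}-2x$, i.e. $R(x)=F^{-1}(\pi_{4/3,2}-2x)=\sin_{4/3,2}2x$. The two pieces yield the identity on all of $[0,\pi_{4/3,2}/2]$.

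The only genuinely laborious part is the derivative formula for $R'$; what makes it tractable is the factorization of $1-R^2$, which both furnishes a clean $(1-R^2)^{3/4}$ and reveals that everything organizes around the quadratics $w^2\pm2w-1$. The rest is bookkeeping: locating the turning point at $x_0=\pi_{4/3,2}/4$, tracking the sign of $w^2+2w-1$ across it, and verifying the endpoint values so the two one-sided integrations glue. A convenient consistency check is that $R'$ equals $2$, $0$, $-2$ at $x=0$, $\pi_{4/3,2}/4$, $\pi_{4/3,2}/2$, matching $2\cos_{4/3,2}2x$ there.
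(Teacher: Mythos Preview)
Your argument is correct, and the two key polynomial identities you invoke,
\[
(w^2-2w-1)^4-(w^2+2w-1)^4=16w(1-w^2)(1+w^2)^2
\]
and
\[
(7w^4-2w^2-1)(1+2w-w^2)+8w(1-w)(1-w^4)=(w^2+2w-1)^3,
\]
do hold, so the factorization of $1-R^2$ and the closed form for $R'$ are exactly as you state. The split at $x_0=\pi_{4/3,2}/4$ and the two one-sided integrations then pin down $R$ as $\sin_{4/3,2}2x$ on each half, using $R(0)=R(\pi_{4/3,2}/2)=0$.

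Your route, however, is quite different from the paper's. The paper does not verify the identity directly: it first invokes the duality lemma (Lemma~\ref{lem:duality}) to rewrite $\sin_{4/3,2}2x$ as $\cos_{2,4}(\pi_{2,4}/2-x)$, then uses the lemniscate addition formula to obtain the compact relation $\sin_{4/3,2}2x=2\sin_{2,4}x/(1+\sin_{2,4}^2x)$, and finally eliminates $\sin_{2,4}x$ in favor of $\sin_{4/3,2}x$ by combining this relation at $x$ and at $x/2$ with the lemniscate double-angle formula. So the paper's proof is structural: it explains \emph{why} such a formula exists, via the bridge to the classical lemniscate function. Your proof is a self-contained ODE verification: it needs no knowledge of Lemma~\ref{lem:duality} or of lemniscate identities, but it does require discovering the factorization $1-R^2=\bigl((w^2+2w-1)/(1+2w-w^2)\bigr)^4$, which is the real insight in your approach and without which the computation of $R'/(1-R^2)^{3/4}$ would be opaque. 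Each method has its merits: the paper's is more illuminating and generalizable along the lines of Table~\ref{tab:(p,q)}, while yours is more elementary and could in principle be used to \emph{confirm} a conjectured formula for other parameter pairs once a candidate is in hand.
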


The double-angle formulas for $\cos_{2,3}{x}$ and $\cos_{4/3,2}{x}$ are also
obtained by differentiating both sides of those for $\sin_{2,3}{x}$ and $\sin_{4/3,2}{x}$, respectively.
 
Finally, we summarize the relationship between parameters
in which double-angle formulas have been obtained
(Table \ref{tab:(p,q)}). 
Lemma \ref{lem:maf} (resp. Lemma \ref{lem:duality}) below
connects $(2,p)$ to $(p^*,p)$ (resp. $(p^*,2)$),
where $p^*:=p/(p-1)$.
Thus, there also exists an alternative proof of case $(4/3,4)$ such that 
one uses Lemma \ref{lem:maf} and the double-angle formula for $(2,4)$
(see \cite[Section 3.1]{T2016b}).
Nevertheless, the case $(3/2,2)$ is an open problem
because of difficulty of the inverse problem corresponding to \eqref{eq:f(2x)}.
\begin{table}[h]
\begin{center}
\begin{tabular}{|c|l|l|l|}
\noalign{\hrule height0.8pt}
$p$ & $(p^*,2)$ & $(2,p)$ & $(p^*,p)$ \\
\hline
$2$ & $(2,2)$ by Abu al-Wafa' & $(2,2)$ by Abu al-Wafa' & $(2,2)$ by Abu al-Wafa' \\
$3$ & $(3/2,2)$ open & $(2,3)$ \textbf{Theorem \ref{thm:(p,q)=(2,3)}} & $(3/2,3)$ by Dixon \\
$4$ & $(4/3,2)$ \textbf{Theorem \ref{thm:(p,q)=(4/3,2)}} & $(2,4)$ by Euler & $(4/3,4)$ by Edmunds et al. \\
\noalign{\hrule height0.8pt}
\end{tabular}
\end{center}
\caption{The parameters in which the double-angle formulas have been obtained.}
\label{tab:(p,q)}
\end{table}


\section{Proofs of theorems}

To prove Theorem \ref{thm:(p,q)=(2,3)}, we use the following multiple-angle formulas. 

\begin{lem}[\cite{T2016b}]
\label{lem:maf}
Let $p>1$ and $p^*:=p/(p-1)$. If $x \in [0,\pi_{2,p}/(2^{2/p})]=[0,\pi_{p^*,p}/2]$, then
\begin{align}
\sin_{2,p}{(2^{2/p}x)}&=2^{2/p}\sin_{p^*,p}{x}\cos_{p^*,p}^{p^*-1}{x}, \label{eq:mafs} \\
\cos_{2,p}{(2^{2/p} x)}&=\cos_{p^*,p}^{p^*}{x}-\sin_{p^*,p}^p{x} \notag \\
&=1-2\sin_{p^*,p}^p{x}=2\cos_{p^*,p}^{p^*}{x}-1. \label{eq:mafc}
\end{align}
\end{lem}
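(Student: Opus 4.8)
The plan is to establish \eqref{eq:mafs} first by a direct integral substitution on the first quarter‑period $[0,\pi_{p^*,p}/4]$, then extend it to all of $[0,\pi_{p^*,p}/2]$ by a reflection, and finally deduce \eqref{eq:mafc} by differentiating \eqref{eq:mafs}. Two elementary facts about $\sin_{p^*,p}$ carry the computation. First, differentiating the Pythagorean‑type identity $\cos_{p^*,p}^{p^*}{x}+\sin_{p^*,p}^p{x}=1$ and using $(\sin_{p^*,p}{x})'=\cos_{p^*,p}{x}$ gives $(\cos_{p^*,p}{x})'=-(p-1)\sin_{p^*,p}^{p-1}{x}\,\cos_{p^*,p}^{2-p^*}{x}$; here the algebraic identity $(p^*-1)(p-1)=1$ is what will make things collapse. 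Second, the substitution $t^p=1-s^p$ in the defining integral $F_{p^*,p}$ — which works precisely because the second parameter $p$ and $p^*$ are conjugate, so that the exponents line up — yields the complementary‑angle identity
\begin{equation*}
\sin_{p^*,p}^p\!\left(\frac{\pi_{p^*,p}}{2}-x\right)+\sin_{p^*,p}^p{x}=1, \qquad x\in\left[0,\frac{\pi_{p^*,p}}{2}\right],
\end{equation*}
and putting $x=\pi_{p^*,p}/4$ in it shows $\sin_{p^*,p}^p(\pi_{p^*,p}/4)=1/2$.

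Next I would set $u(x):=2^{2/p}\sin_{p^*,p}{x}\,\cos_{p^*,p}^{p^*-1}{x}$. Since $(p^*-1)p=p^*$, one has $u(x)^p=4\sin_{p^*,p}^p{x}\,\cos_{p^*,p}^{p^*}{x}=4\sin_{p^*,p}^p{x}\,(1-\sin_{p^*,p}^p{x})$, hence $1-u(x)^p=(1-2\sin_{p^*,p}^p{x})^2$; in particular $u(x)\in[0,1]$. A short differentiation using the derivative formula above gives $u'(x)=2^{2/p}(1-2\sin_{p^*,p}^p{x})$ on $[0,\pi_{p^*,p}/2]$. On $[0,\pi_{p^*,p}/4]$ we have $\sin_{p^*,p}^p{x}\le 1/2$, so $u'(x)=2^{2/p}(1-u(x)^p)^{1/2}\ge 0$ and $u(0)=0$; therefore $(d/dx)\,F_{2,p}(u(x))=u'(x)/(1-u(x)^p)^{1/2}=2^{2/p}$, whence $F_{2,p}(u(x))=2^{2/p}x$ for $x\in[0,\pi_{p^*,p}/4]$. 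Evaluating at $x=\pi_{p^*,p}/4$ yields $\pi_{2,p}=2^{2/p-1}\pi_{p^*,p}$, so $2^{2/p}x$ ranges over $[0,\pi_{2,p}/2]$, where $F_{2,p}$ inverts $\sin_{2,p}$; this gives $u(x)=\sin_{2,p}(2^{2/p}x)$ on $[0,\pi_{p^*,p}/4]$.

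For $x\in[\pi_{p^*,p}/4,\pi_{p^*,p}/2]$ I would put $x':=\pi_{p^*,p}/2-x\in[0,\pi_{p^*,p}/4]$. The complementary identity gives $\sin_{p^*,p}^p{x'}=1-\sin_{p^*,p}^p{x}$, hence $\sin_{p^*,p}^p{x'}(1-\sin_{p^*,p}^p{x'})=\sin_{p^*,p}^p{x}(1-\sin_{p^*,p}^p{x})$, so by the formula for $u^p$ (both being nonnegative $p$-th roots) $u(x')=u(x)$. On the other hand $2^{2/p}x'=\pi_{2,p}-2^{2/p}x$ with $2^{2/p}x\in[\pi_{2,p}/2,\pi_{2,p}]$, so the reflection built into the definition of $\sin_{2,p}$ on $(\pi_{2,p}/2,\pi_{2,p}]$ gives $\sin_{2,p}(2^{2/p}x)=\sin_{2,p}(2^{2/p}x')=u(x')=u(x)$ by the previous paragraph. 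This proves \eqref{eq:mafs} on all of $[0,\pi_{p^*,p}/2]$. Differentiating \eqref{eq:mafs} then gives $2^{2/p}\cos_{2,p}(2^{2/p}x)=u'(x)=2^{2/p}(1-2\sin_{p^*,p}^p{x})$, and combining with $\cos_{p^*,p}^{p^*}{x}+\sin_{p^*,p}^p{x}=1$ converts $1-2\sin_{p^*,p}^p{x}$ into the two remaining forms $\cos_{p^*,p}^{p^*}{x}-\sin_{p^*,p}^p{x}$ and $2\cos_{p^*,p}^{p^*}{x}-1$ in \eqref{eq:mafc}.

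The main obstacle is exactly this passage to the second half of the interval: past the maximum $u=1$ the relation $u'=\pm 2^{2/p}(1-u^p)^{1/2}$ fails to be Lipschitz in $u$, so one cannot simply match $u$ with $\sin_{2,p}(2^{2/p}\cdot)$ by an ODE‑uniqueness argument there; the reflection step, powered by the complementary‑angle identity together with the reflection symmetry in the definition of $\sin_{2,p}$, is what circumvents this. Accordingly, the one genuinely non‑mechanical ingredient is finding and exploiting that identity — in particular spotting the substitution $t^p=1-s^p$, which succeeds only for conjugate exponents — everything else being the routine differentiations indicated above.
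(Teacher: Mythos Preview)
The paper does not give its own proof of this lemma: it is quoted from \cite{T2016b} and used as a black box. So there is nothing in the paper to compare your argument against step by step.

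That said, your proof is correct and self-contained. The key computations check out: with $s:=\sin_{p^*,p}{x}$ and $c:=\cos_{p^*,p}{x}$ one has $u^p=4s^p(1-s^p)$, hence $1-u^p=(1-2s^p)^2$, and the product-rule computation using $(p^*-1)(p-1)=1$ indeed collapses to $u'=2^{2/p}(1-2s^p)$. The complementary-angle identity for $\sin_{p^*,p}$ is exactly what the substitution $t^p=1-\tau^p$ in $F_{p^*,p}$ gives (the point being that $(1-t^p)^{1/p^*}=\tau^{p-1}$ and $dt=-\tau^{p-1}(1-\tau^p)^{-(p-1)/p}\,d\tau$ match), so $\sin_{p^*,p}^p(\pi_{p^*,p}/4)=1/2$ and your reflection step on $[\pi_{p^*,p}/4,\pi_{p^*,p}/2]$ goes through using the built-in symmetry $\sin_{2,p}(\pi_{2,p}-y)=\sin_{2,p}{y}$. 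Your remark that ODE uniqueness is unavailable at $u=1$ is apt and explains why the reflection, rather than a global uniqueness argument, is the right tool for the second half. The deduction of \eqref{eq:mafc} by differentiation is immediate from $u'=2^{2/p}(1-2s^p)$.
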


\begin{proof}[Proof of Theorem \ref{thm:(p,q)=(2,3)}]
From \eqref{eq:mafc} in Lemma \ref{lem:maf}, we have 
\begin{align}
\sin_{p^*,p}{x}=\left(\frac{1-\cos_{2,p}{(2^{2/p}x)}}{2}\right)^{1/p}, \label{eq:hankakus}\\
\cos_{p^*,p}{x}=\left(\frac{1+\cos_{2,p}{(2^{2/p}x)}}{2}\right)^{1/p^*}. \label{eq:hankakuc}
\end{align}

Let $x \in [0,\pi_{2,3}/2]$ and $y:=x/(2^{2/3})$. 
It follows from \eqref{eq:mafs} in Lemma \ref{lem:maf} that
since $2y \in [0,\pi_{2,3}/(2^{2/3})]=[0,\pi_{3/2,3}/2]$, 
\begin{align}
\sin_{2,3}{(2x)}
&=\sin_{2,3}{(2^{2/3}\cdot 2y)} \notag \\
&=2^{2/3}\sin_{3/2,3}{(2y)}\cos_{3/2,3}^{1/2}{(2y)}. 
\label{eq:xtoy}
\end{align}
Dixon's formula \eqref{eq:dixon}
with \eqref{eq:hankakus} and \eqref{eq:hankakuc} yields
\begin{align*}
\sin_{3/2,3}{(2y)}
&=\frac{\sin_{3/2,3}{y}(1+\cos_{3/2,3}^{3/2}{y})}{\cos_{3/2,3}^{1/2}{y}(1+\sin_{3/2,3}^3{y})}
=\frac{(1-C)^{1/3}(3+C)}{(1+C)^{1/3}(3-C)},
\end{align*}
where $C=\cos_{2,3}{(2^{2/3}y)}=\cos_{2,3}{x}$. Moreover,
$$\cos_{3/2,3}^{1/2}{(2y)}=(1-\sin_{3/2,3}^3{(2y)})^{1/3}=\frac{2^{4/3}C}{(1+C)^{1/3}(3-C)}.$$
Therefore, from \eqref{eq:xtoy} we have
\begin{align*}
\sin_{2,3}{(2x)}
&=2^{2/3}\cdot \frac{(1-C)^{1/3}(3+C)}{(1+C)^{1/3}(3-C)} \cdot \frac{2^{4/3}C}{(1+C)^{1/3}(3-C)}\\
&=\frac{4(1-C^2)^{1/3}C(3+C)^3}{(1+C)(9-C^2)^2}.
\end{align*}
Since $1-C^2=\sin_{2,3}^3{x}$, the proof is complete. 
\end{proof}

To show Theorem \ref{thm:(p,q)=(4/3,2)}, the following lemma is useful.

\begin{lem}[\cite{EGL2012}, \cite{T2016b}]
\label{lem:duality}
Let $p,\ q >1$. For $x \in [0,2]$,
\begin{gather*}
q\pi_{p,q}=p^*\pi_{q^*,p^*},\\
\sin_{p,q}{\left(\frac{\pi_{p,q}}{2}x\right)}
=\cos_{q^*,p^*}^{q^*-1}{\left(\frac{\pi_{q^*,p^*}}{2}(1-x)\right)}.
\end{gather*}
\end{lem}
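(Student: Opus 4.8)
The plan is to prove the underlying integral identity by a single change of variables and then read off both assertions. Write $p^{*}=p/(p-1)$ and $q^{*}=q/(q-1)$; the argument will rest on the conjugacy relations $1/p^{*}=1-1/p$, $(q-1)/q=1/q^{*}$ and $q^{*}-1=1/(q-1)$, whence also $q/q^{*}=q-1$ and $(q-1)(q^{*}-1)=1$. Fix first $x\in[0,1]$ and set $y:=\sin_{p,q}{(\tfrac{\pi_{p,q}}{2}x)}\in[0,1]$, so that by definition $\tfrac{\pi_{p,q}}{2}x=\int_{0}^{y}(1-t^{q})^{-1/p}\,dt$. In this integral I would substitute $\tau^{p^{*}}=1-t^{q}$, i.e.\ $\tau=(1-t^{q})^{1/p^{*}}$. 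Then $t^{q-1}=(1-\tau^{p^{*}})^{(q-1)/q}=(1-\tau^{p^{*}})^{1/q^{*}}$, and the net power of $\tau$ appearing in the numerator is $-\tfrac{p^{*}}{p}+p^{*}-1=p^{*}(1-\tfrac1p)-1=0$; after reversing the limits of integration the integrand therefore collapses to a constant multiple of $(1-\tau^{p^{*}})^{-1/q^{*}}$ and one gets
\[
\frac{\pi_{p,q}}{2}\,x=\frac{p^{*}}{q}\int_{(1-y^{q})^{1/p^{*}}}^{1}\frac{d\tau}{(1-\tau^{p^{*}})^{1/q^{*}}}.
\]
Taking $x=1$ (hence $y=1$) sends the lower limit to $0$ and gives $\tfrac{\pi_{p,q}}{2}=\tfrac{p^{*}}{q}\cdot\tfrac{\pi_{q^{*},p^{*}}}{2}$, i.e.\ the first assertion $q\pi_{p,q}=p^{*}\pi_{q^{*},p^{*}}$.

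For general $x\in[0,1]$ I would rewrite the right-hand side above as $\tfrac{p^{*}}{q}\big(\tfrac{\pi_{q^{*},p^{*}}}{2}-\int_{0}^{(1-y^{q})^{1/p^{*}}}(1-\tau^{p^{*}})^{-1/q^{*}}\,d\tau\big)$ and use $\tfrac{p^{*}}{q}\cdot\tfrac{\pi_{q^{*},p^{*}}}{2}=\tfrac{\pi_{p,q}}{2}$ to deduce $\int_{0}^{(1-y^{q})^{1/p^{*}}}(1-\tau^{p^{*}})^{-1/q^{*}}\,d\tau=\tfrac{\pi_{q^{*},p^{*}}}{2}(1-x)$. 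Since $1-x\in[0,1]$, inverting the defining integral of $\sin_{q^{*},p^{*}}$ yields $(1-y^{q})^{1/p^{*}}=\sin_{q^{*},p^{*}}{\big(\tfrac{\pi_{q^{*},p^{*}}}{2}(1-x)\big)}$. Now apply the identity $|\cos_{q^{*},p^{*}}{u}|^{q^{*}}+|\sin_{q^{*},p^{*}}{u}|^{p^{*}}=1$ at $u:=\tfrac{\pi_{q^{*},p^{*}}}{2}(1-x)$, where both functions are nonnegative: $\cos_{q^{*},p^{*}}^{q^{*}}{u}=1-(1-y^{q})=y^{q}$, so $\cos_{q^{*},p^{*}}{u}=y^{q/q^{*}}=y^{q-1}$ and hence $\cos_{q^{*},p^{*}}^{q^{*}-1}{u}=y^{(q-1)(q^{*}-1)}=y=\sin_{p,q}{(\tfrac{\pi_{p,q}}{2}x)}$, which is the second assertion for $x\in[0,1]$. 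For $x\in[1,2]$, write $x=2-x'$ with $x'\in[0,1]$; then the left-hand side equals $\sin_{p,q}{(\tfrac{\pi_{p,q}}{2}x')}$ by the reflection $\sin_{p,q}{(\pi_{p,q}-s)}=\sin_{p,q}{s}$, while the right-hand side equals $\cos_{q^{*},p^{*}}^{q^{*}-1}{(\tfrac{\pi_{q^{*},p^{*}}}{2}(1-x'))}$ because $\cos_{q^{*},p^{*}}$ is even, so this case reduces to the one already proved.

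The step I expect to be the main obstacle is the change of variables: one is simultaneously juggling the two parameter pairs $(p,q)$ and $(q^{*},p^{*})$, and the whole argument hinges on checking that $\tau=(1-t^{q})^{1/p^{*}}$ produces precisely the $(q^{*},p^{*})$-type integrand --- that the residual power of $\tau$ vanishes and that the exponents $1/p$ and $(q-1)/q$ become $1-1/p^{*}$ and $1/q^{*}$ --- which is exactly what the conjugacy relations above arrange. The rest is routine bookkeeping: that $\cos_{q^{*},p^{*}}\ge 0$ on the intervals in play, so that the (generally non-integer) exponent $q^{*}-1$ is unambiguous, and the reflection reduction of $x\in[1,2]$ to $x\in[0,1]$.
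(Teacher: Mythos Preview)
Your proof is correct. The paper does not actually give its own proof of this lemma; it merely cites \cite{EGL2012} and \cite{T2016b} and uses the result as a black box. Your argument via the substitution $\tau=(1-t^{q})^{1/p^{*}}$ in the defining integral is precisely the standard way to establish this duality (and is in fact the argument in \cite{T2016b}): the exponent bookkeeping you lay out is exactly what makes the $(p,q)$-integrand collapse into the $(q^{*},p^{*})$-integrand, and the reflection/evenness step cleanly extends the range to $x\in[1,2]$. There is nothing to add.
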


\begin{proof}[Proof of Theorem \ref{thm:(p,q)=(4/3,2)}]
Let $x \in [0,\pi_{4/3,2}/2]$. Then,
since $4x/\pi_{4/3,2} \in [0,2]$, it follows from Lemma \ref{lem:duality} that
$$\sin_{4/3,2}{2x}
=\cos_{2,4}{\left(\frac{\pi_{2,4}}{2}\left(1-\frac{4x}{\pi_{4/3,2}}\right)\right)}
=\cos_{2,4}{\left(\frac{\pi_{2,4}}{2}-x\right)}.$$
Thus,
\begin{equation}
\label{eq:sin2x}
\sin_{4/3,2}{2x}=\sqrt{1-\sin_{2,4}^4{\left(\frac{\pi_{2,4}}{2}-x\right)}}.
\end{equation}
Function $\sin_{2,4}$ coincides with the lemniscate function, it has 
the addition formula: for any $u,\ v \in \mathbb{R}$,
\begin{equation}
\label{eq:atlem}
\sin_{2,4}{(u+v)}=\frac{\sin_{2,4}{u}\cos_{2,4}{v}+\sin_{2,4}{u}\cos_{2,4}{v}}
{1+\sin_{2,4}^2{u}\sin_{2,4}^2{v}}.
\end{equation}
Applying \eqref{eq:atlem} to the right-hand side of \eqref{eq:sin2x}, we obtain
$$\sin_{4/3,2}{2x}=\frac{2\sin_{2,4}{x}}{1+\sin_{2,4}^2{x}}.$$

We need only consider case $x \in (0,\pi_{4/3,2}/2)=(0,\pi_{2,4})$.
Let $f(x):=\sin_{4/3,2}{x}$ and $g(x):=\sin_{2,4}{x}$. Then, $g(x) \neq 0$ and
\begin{equation}
\label{eq:f(2x)}
f(2x)=\frac{2g(x)}{1+g(x)^2}=\frac{2}{1/g(x)+g(x)}.
\end{equation}
Therefore, it is easy to see that 
\begin{align}
\frac{1}{g(x)}+g(x)
&=\frac{2}{f(2x)}, \notag \\
\frac{1}{g(x)^2}+g(x)^2
&=\frac{4}{f(2x)^2}-2, \label{eq:+} \\
\frac{1}{g(x)^2}-g(x)^2
&=\frac{4}{f(2x)}\sqrt{\frac{1}{f(2x)^2}-1}. \label{eq:-}
\end{align}
Moreover, letting $u=v=x/2$ in \eqref{eq:atlem}, we see that $g(x)$ satisfies 
\begin{equation}
\label{eq:g(x)}
g(x)=\frac{2g(x/2)\sqrt{1-g(x/2)^4}}{1+g(x/2)^4}.
\end{equation}
Thus, substituting \eqref{eq:g(x)} into \eqref{eq:f(2x)}, we obtain
\begin{align*}
f(2x)
&=\frac{4(1/g(x/2)^2+g(x/2)^2)\sqrt{1/g(x/2)^2-g(x/2)^2}}
{(1/g(x/2)^2+g(x/2)^2)^2+4(1/g(x/2)^2-g(x/2)^2)}.
\end{align*}
Since \eqref{eq:+} and \eqref{eq:-} hold true for $x$ replaced with $x/2$,
we can express $f(2x)$ in terms of $f(x)$, i.e.,
$$f(2x)=\frac{4f(x)(1-f(x)^2)^{1/4}(2-f(x)^2)}
{(f(x)^2+2\sqrt{1-f(x)^2})^2}.$$
Since $1-f(x)^2=\cos_{4/3,2}^{4/3}{x}$, the proof is complete.
\end{proof}




\begin{thebibliography}{99}


%



%
%
 

%





%

%

%

%
%


%
%











\bibitem{D1890} A.C.\,Dixon,
On the doubly periodic functions arising out of the curve $x^3+y^3-3\alpha xy=1$,
\textit{The Quarterly Journal of Pure and Applied Mathematics} \textbf{24} (1890),
167--233.

\bibitem{EGL2012} D.E.\,Edmunds, P.\,Gurka and J.\,Lang,
Properties of generalized trigonometric functions, 
\textit{J.\,Approx.\,Theory} \textbf{164} (2012), no.\,1, 47--56. 
%

%
%
%

%
%










\bibitem{KT2019} H.\,Kobayashi and S.\,Takeuchi,
Applications of generalized trigonometric functions
with two parameters,
\textit{Commun.\,Pure Appl.\,Anal.} \textbf{18} no. 3 (2019), 1509--1521.


%


%





 

\bibitem{PS1997} V.\,Prasolov and Y.\,Solovyev,
\textit{Elliptic functions and elliptic integrals}. 
Translated from the Russian manuscript by D. Leites. 
Translations of Mathematical Monographs, 170. American Mathematical Society,
Providence, RI, 1997.









%

%
%

\bibitem{T2016b} S.\,Takeuchi,
Multiple-angle formulas of generalized trigonometric functions with two parameters,
\textit{J.\,Math.\,Anal.\,Appl.} \textbf{444} no. 2 (2016), 1000--1014.

%





%
%


%
\end{thebibliography}
\end{document}